\def\thetitle{{Right-angled Artin groups in the $C^\infty$ diffeomorphism group of the real line}}
\newtheorem{thm}{Theorem}
\newtheorem{lem}[thm]{Lemma}
\newtheorem{cor}[thm]{Corollary}
\theoremstyle{remark}
\theoremstyle{definition}
\newcommand\abs[1]{\lvert #1\rvert}
\newcommand\co{\colon}
\newcommand\R{\mathbb{R}}
\newcommand\Z{\mathbb{Z}}
\newcommand\supp{\operatorname{supp}}
\newcommand\Mod{\operatorname{Mod}}
\newcommand\Homeo{\operatorname{Homeo}}
\newcommand\Fix{\operatorname{Fix}}
\newcommand\DiffR{\operatorname{Diff}^\infty_+(\mathbb{R})}
\newcommand\DiffI{\operatorname{Diff}^\infty_+(I,\partial I)}
\newcommand\DiffS{\operatorname{Diff}^\infty_+(\mathbb{S}^1)}
\newcommand\DiffInf{\operatorname{Diff}^\infty_+([0,\infty),\{0\})}
\newcommand\Diff{\operatorname{Diff}^\infty_+}
\newcommand\PLR{\operatorname{PL}_+(\mathbb{R})}
\newcommand\PLI{\operatorname{PL}_+(I)}
\begin{document}

\title[RAAGs in $\Diff(\R)$]\thetitle

\author{Hyungryul Baik}
\address{Department of Mathematics, Cornell University, 111 Malott Hall, Ithaca, NY 14853, USA}
\email{hb278@cornell.edu}

\author{Sang-hyun Kim}
\address{Department of Mathematical Sciences, Seoul National Univeristy, Seoul 151-747, Republic of Korea}
\email{s.kim@snu.ac.kr}

\author{Thomas Koberda}
\address{Department of Mathematics, Yale University, 20 Hillhouse Ave, New Haven, CT 06520, USA}
\email{thomas.koberda@gmail.com}
\date{\today}
\keywords{right-angled Artin group, limit group, diffeomorphism group, left--greedy form}
\begin{abstract}
We prove that every right-angled Artin group embeds into the $C^{\infty}$ diffeomorphism group of the real line.
As a corollary, we show every limit group, and more generally every countable residually RAAG group,
embeds into  the $C^{\infty}$ diffeomorphism group of the real line.
\end{abstract}

\maketitle


\section{Introduction}
The \emph{right-angled Artin group} on a finite simplicial graph $\Gamma$ is the following group presentation:
 \[A(\Gamma)=\langle V(\Gamma)\mid [v_i,v_j]=1 \textrm{ if and only if } \{v_i,v_j\}\in E(\Gamma)\rangle.\]  Here, $V(\Gamma)$ and $E(\Gamma)$ denote the vertex set and the edge set of $\Gamma$, respectively.

For a smooth oriented manifold $X$, we let $\Diff(X)$ denote the group of orientation preserving  $C^{\infty}$ diffeomorphisms on $X$. 
A group $G$ is said to \emph{embed} into another group $H$ if there is an injective group homormophism $G\to H$. 
Our main result is the following.




\begin{thm}\label{t:main}
Every right-angled Artin group embeds into $\DiffR$.
\end{thm}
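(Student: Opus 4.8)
The plan is to build a faithful $C^{\infty}$ action directly. Since every graph here is finite, $A(\Gamma)$ is finitely generated, so it suffices to produce one faithful $C^{\infty}$ action on the line; concretely I would construct an injection $\rho\co A(\Gamma)\to\DiffInf$, i.e.\ a faithful action on $[0,\infty)$ fixing $0$ by diffeomorphisms compactly supported in $(0,\infty)$, and then observe that restricting this action to $(0,\infty)\cong\R$ is still faithful, because a $C^{\infty}$ diffeomorphism of $[0,\infty)$ that fixes $0$ and is trivial on the dense subset $(0,\infty)$ is the identity. This would give Theorem~\ref{t:main}.

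For the construction, first realize the complement graph $\overline{\Gamma}$ as the intersection graph of a family $\{U_v\}_{v\in V(\Gamma)}$ in which each $U_v$ is a finite union of bounded open intervals, all contained in a fixed compact subinterval of $(0,\infty)$, with all component endpoints distinct: that is, $U_u\cap U_v\neq\emptyset$ exactly when $u,v$ are non-adjacent in $\Gamma$. This is possible because every finite graph is a multiple-interval graph, and one has enough freedom to place the configuration in whatever generic position the faithfulness argument requires. For each $v$, choose $\phi_v\in\DiffInf$ with $\supp\phi_v=U_v$, infinitely tangent to the identity at every endpoint of every component of $U_v$ — so that $\phi_v$, extended by the identity off $U_v$, really is $C^{\infty}$ on $[0,\infty)$ — and with very strong ``North--South''/contracting dynamics on each component of $U_v$. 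Since $\phi_u$ and $\phi_v$ have disjoint supports precisely when $u,v$ are adjacent, they then commute in exactly that case, so $v\mapsto\phi_v$ induces a homomorphism $\rho\co A(\Gamma)\to\DiffInf$.

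To see that $\rho$ is injective, take $1\neq w\in A(\Gamma)$ and write it in left--greedy normal form
\[ w=w_1w_2\cdots w_k, \]
where each syllable $w_i$ is a nonempty product of pairwise-commuting generators and no generator occurring in $w_{i+1}$ can be shuffled left past $w_i$. Arguing by induction on $k$ and using the North--South dynamics, one finds a point $p\in(0,\infty)$ such that $\rho(w_k)$ pushes $p$ into a prescribed attracting region; the left--greedy condition rules out the cancellation that $\rho(w_{k-1})$ would need in order to undo this, so $\rho(w_{k-1})$ pushes the point still farther into its own region, and so on, forcing $\rho(w)p\neq p$. Hence $\rho$ is injective, and by the first paragraph the theorem follows.

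The hard part is the middle step: the $\phi_v$ must be $C^{\infty}$ and flat to infinite order at the endpoints of their multi-interval supports while still expanding/contracting strongly enough on the overlaps to drive the ping-pong, and the interval configuration must certify $A(\Gamma)$ itself rather than a proper quotient (no unexpected relations among three or more generators). I expect the key auxiliary result to be a one-dimensional ``smooth North--South'' lemma: on a compact interval, for any prescribed infinite order of tangency to the identity at the two endpoints, there is a $C^{\infty}$ diffeomorphism with arbitrarily strong North--South dynamics; the $\phi_v$ are then assembled from finitely many such blocks. A more structural alternative is to induct on $|V(\Gamma)|$ via the splitting $A(\Gamma)\cong\bigl(\Z\times A(\mathrm{lk}\,v)\bigr)*_{A(\mathrm{lk}\,v)}A(\Gamma-v)$, valid for any vertex $v$ not joined to all others and with $A(\mathrm{lk}\,v)$ a retract of both factors; this would reduce everything to a single lemma on smoothly amalgamating two half-line actions over a common retract, where again the smoothness of the infinite gluing is the crux.
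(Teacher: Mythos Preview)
Your route is genuinely different from the paper's, and the place where it diverges is exactly where there is a gap. You try to build a \emph{single} faithful $\rho\co A(\Gamma)\to\DiffInf$ by fixing once and for all a multi-interval configuration $\{U_v\}$ realizing $\overline\Gamma$ and then arguing injectivity via North--South dynamics and the left--greedy form. But the sentence ``the left--greedy condition rules out the cancellation that $\rho(w_{k-1})$ would need in order to undo this'' is an assertion, not an argument, and it is the entire difficulty. In a fixed configuration a generator $v$ appearing in several syllables of $w$ acts by the \emph{same} $\phi_v$ every time; there is no mechanism ensuring that syllable $i$ deposits the test point somewhere syllable $i{+}1$ is forced to move it further, particularly once each $U_v$ has several components whose overlaps with the other $U_u$ are scattered. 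You flag the ``hard part'' as manufacturing $C^\infty$ bumps that are flat at the endpoints yet strongly North--South --- that part is routine. The genuinely hard part is the combinatorial tracking of an arbitrary left--greedy word through a fixed finite pattern of supports, and your sketch does not supply it; nor does the inductive splitting alternative you mention, since faithfulness of an amalgamated action on the line over a non-cyclic edge group is again exactly the kind of injectivity statement you are trying to prove.

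The paper avoids this problem rather than solving it. It never builds one faithful action. Instead it shows that $A(\Gamma)$ is residually $\DiffI$ (Lemma~\ref{l:eachword}): for each nontrivial $g$ it constructs $\phi_g\co A(\Gamma)\to\DiffI$ with $\phi_g(g)\ne1$, where the interval configuration is \emph{tailored to $g$}. If $g$ has left--greedy length $k$, one lays down $k$ translates $I_1,\ldots,I_k\subset\R$ of a single bump $\rho$, picks one vertex $v_\ell\in\supp(w_\ell)$ per syllable with $[v_\ell,v_{\ell+1}]\ne1$, and sets $\psi_g(v)$ to be the product of the bumps at those indices $\ell$ with $v_\ell=v$; then the $\ell$th syllable visibly carries a test point from $I_\ell$ into $I_{\ell+1}$, so $\psi_g(g)\ne1$. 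The faithful embedding is finally the diagonal product $\prod_g\phi_g$ landing in $\prod_\Z\DiffI\hookrightarrow\DiffInf\hookrightarrow\DiffR$ (Lemma~\ref{l:residual} and Theorem~\ref{t:stronger}). So your instinct to use the left--greedy form is correct, but in the paper it is used to \emph{design} a word-dependent configuration on which the tracking is trivial, not to run a ping-pong on a universal one.
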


Recall that a finitely generated group $G$ is a \emph{limit group} (or a \emph{fully residually free group}) if for each finite set $F\subset G$, there exists a homomorphism $\phi_F$ from $G$  to a nonabelian free group such that $\phi_F$ is an injection when restricted to $F$. The class of limit groups fits into a larger class of groups, which we call \emph{residually RAAG groups}.  A group $G$ is in this class if for each $1\neq g\in G$, there exists a graph $\Gamma_g$ and a homomorphism $\phi_g\colon G\to A(\Gamma_g)$ such that $\phi_g(g)\ne1$.  
Since the class of right-angled Artin groups is closed under taking finite direct products, we could replace $g$ with an arbitrary finite subset of $G$. 
Theorem~\ref{t:main} can be strengthened as follows.

\begin{cor}\label{c:limit}
Every countable residually RAAG group embeds into 
$\DiffR$.
\end{cor}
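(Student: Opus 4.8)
The plan is to route the right-angled Artin quotients of $G$ through Theorem~\ref{t:main} and then glue the resulting faithful actions together on a locally finite family of intervals in $\R$. For this I would use the following refinement of Theorem~\ref{t:main}, which is the version actually produced by its proof: every right-angled Artin group embeds into $\DiffI$, the group of $C^\infty$ diffeomorphisms of $I=[0,1]$ that are infinitely tangent to the identity at $\partial I$; equivalently, it embeds into $\DiffR$ by diffeomorphisms with support in $[0,1]$ that are infinitely tangent to the identity at $0$ and at $1$. Infinite tangency at the ends of a compact support is precisely what allows such diffeomorphisms to be glued along a locally finite family of walls while remaining $C^\infty$.

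Since $G$ is countable, I would enumerate $G\setminus\{1\}=\{g_1,g_2,\dots\}$ (with repetitions allowed, so that the list is infinite in every case). For each $n\geq1$ the residually-RAAG hypothesis gives a graph $\Gamma_n$ and a homomorphism $\phi_n\colon G\to A(\Gamma_n)$ with $\phi_n(g_n)\neq1$. Using the refinement above, fix for each $n$ a faithful homomorphism $\alpha_n\colon A(\Gamma_n)\to\Diff([n,n+1],\{n,n+1\})$ whose image consists of diffeomorphisms infinitely tangent to the identity at $n$ and at $n+1$ (conjugate the action on $[0,1]$ by the affine homeomorphism $[0,1]\to[n,n+1]$, which preserves infinite tangency at the endpoints). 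Now define $\eta\colon G\to\DiffR$ by letting $\eta(g)$ be the bijection of $\R$ that equals $\alpha_n\bigl(\phi_n(g)\bigr)$ on $[n,n+1]$ for every $n\geq1$ and equals the identity on $(-\infty,1]$.

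It then remains to verify that $\eta$ is a well-defined injective homomorphism into $\DiffR$. Away from the positive integers, $\eta(g)$ locally agrees with a single $\alpha_n(\phi_n(g))$ or with the identity, hence is $C^\infty$ there; at an integer $n+1\geq2$ the one-sided germs of $\eta(g)$ are both infinitely tangent to the identity, so they agree to infinite order and $\eta(g)$ is $C^\infty$ across $n+1$ as well. Since each $\alpha_n(\phi_n(g))$ is an orientation-preserving bijection of $[n,n+1]$ fixing its endpoints, $\eta(g)\in\DiffR$; working on one interval at a time shows that $\eta$ is a homomorphism; and $\eta$ is injective because a nontrivial $g$ is some $g_n$, so $\phi_n(g)\neq1$, whence $\alpha_n(\phi_n(g))\neq\mathrm{id}$ and $\eta(g)\neq\mathrm{id}$. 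In particular every limit group, being countable and residually free and therefore residually RAAG, embeds into $\DiffR$.

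I expect the only genuine obstacle to be the first step. The bare statement of Theorem~\ref{t:main} does not suffice, since a subgroup of $\DiffR$ need not be conjugate into any compactly supported subgroup --- for instance, a single affine-type element with a hyperbolic fixed point ``at infinity'' cannot be made compactly supported by conjugation --- so one cannot simply compactify arbitrary actions on $\R$. One must instead extract from the proof of Theorem~\ref{t:main} that the right-angled Artin group already acts by compactly supported diffeomorphisms that are infinitely tangent to the identity at the boundary of their support. Granting that, the choice to place the $n$-th block on $[n,n+1]$ rather than on a family of intervals shrinking toward a point means one never has to control the $C^\infty$-size of $\alpha_n$ as $n\to\infty$, and the rest is the standard $C^\infty$-gluing of a locally finite family.
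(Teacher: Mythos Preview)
Your gluing strategy on the intervals $[n,n+1]$ is exactly how the paper proceeds, but the ``refinement'' you invoke is not what the proof of Theorem~\ref{t:main} actually yields. The paper establishes only that each $A(\Gamma)$ is \emph{residually} $\DiffI$ (Lemma~\ref{l:eachword}); the faithful action on $\R$ is then obtained as the product map into $\prod_\Z\DiffI\hookrightarrow\DiffInf$ (Theorem~\ref{t:stronger}), so the resulting action of $A(\Gamma)$ is supported on all of $[0,\infty)$, not on a compact interval. Your claim that one can extract from that proof a faithful embedding $A(\Gamma)\hookrightarrow\DiffI$ is therefore unjustified---you would have to compactify an action whose support is already unbounded, which is precisely the obstruction you yourself flag for a black-box use of Theorem~\ref{t:main}.

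The fix is to drop the requirement that $\alpha_n$ be faithful: in your injectivity check you only use $\alpha_n(\phi_n(g_n))\neq 1$, and Lemma~\ref{l:eachword} supplies exactly such a (generally non-injective) homomorphism $\alpha_n\colon A(\Gamma_n)\to\DiffI$ for the single element $\phi_n(g_n)$. Composing, $G$ itself is residually $\DiffI$, and your gluing on $\bigcup_n[n,n+1]$ is then nothing but the embedding $\prod_\Z\DiffI\hookrightarrow\DiffInf\hookrightarrow\DiffR$ applied to the product map of Lemma~\ref{l:residual}. With that correction your argument coincides with the paper's proof.
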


A similar argument to the proof of Theorem~\ref{t:main} also applies to the group $\PLR$ of orientation preserving piecewise-linear homeomorphisms of $\R$.

\begin{cor}\label{c:PL}
Every countable residually RAAG group embeds into $\PLR$.
\end{cor}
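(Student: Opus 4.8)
The plan is to imitate the proof of Corollary~\ref{c:limit}, with $\PLR$ playing the role of $\DiffR$ throughout. This reduces the statement to two things: a piecewise-linear version of Theorem~\ref{t:main}, and a ``diagonal'' packing of countably many interval groups into $\PLR$ --- the latter being, if anything, easier in the PL category than in the $C^\infty$ category.

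First I would establish the PL analogue of Theorem~\ref{t:main}: every right-angled Artin group embeds into $\PL_+$ of a compact interval, i.e.\ into $\PLI$ (recall that an orientation-preserving PL homeomorphism of $I$ automatically fixes $\partial I$). As flagged in the remark following the statement, this comes from rerunning the proof of Theorem~\ref{t:main} in the piecewise-linear world: wherever that argument constructs or smooths $C^\infty$ diffeomorphisms one substitutes the corresponding PL homeomorphisms, and the combinatorial heart of the argument --- the normal-form/ping-pong computation forcing injectivity of the RAAG representation --- goes through verbatim, since it uses only the supports and the linear ordering of the chosen generators, not smoothness.

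Next I would set up the diagonal embedding $\prod_{i\ge 1}\PLI \hookrightarrow \PLR$. Partition $[0,\infty)$ into the consecutive compact intervals $I_i=[i-1,i]$ for $i\ge 1$, and let the $i$-th factor act on $I_i$ through an affine identification $I\cong I_i$ (conjugating by an affine map preserves piecewise-linearity). Given $(h_i)_{i\ge1}\in\prod_{i\ge1}\PLI$, glue the $h_i$ together and extend by the identity on $(-\infty,0]$; because each $h_i$ fixes $\partial I_i$, the result is a well-defined orientation-preserving homeomorphism of $\R$, and it is piecewise linear with a discrete breakpoint set (each $I_i$ contributes finitely many breakpoints and each compact subset of $\R$ meets only finitely many $I_i$). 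The map sending $(h_i)_i$ to this homeomorphism is an injective homomorphism. Note that no matching of derivatives or fragmentation at the integer gluing points is needed here: continuity together with ``fixes the endpoints'' already does it, which is exactly why this step is cleaner than its counterpart in the proof of Corollary~\ref{c:limit}.

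Finally I would assemble the embedding. Let $G$ be a countable residually RAAG group, which we may assume nontrivial, and enumerate $G\setminus\{1\}=\{g_1,g_2,\dots\}$. For each $i$ choose a graph $\Gamma_i:=\Gamma_{g_i}$ and a homomorphism $\phi_i:=\phi_{g_i}\co G\to A(\Gamma_i)$ with $\phi_i(g_i)\ne1$; then $g\mapsto(\phi_i(g))_i$ is an injective homomorphism $G\to\prod_{i\ge1}A(\Gamma_i)$, because a nontrivial $g$ equals some $g_j$ and so has nontrivial $j$-th coordinate. Composing coordinatewise with the PL embeddings $A(\Gamma_i)\hookrightarrow\PLI$ and then with the diagonal embedding $\prod_{i\ge1}\PLI\hookrightarrow\PLR$ yields the desired injection $G\hookrightarrow\PLR$. (The identical scheme with $\DiffR$ and $\DiffI$ in place of $\PLR$ and $\PLI$ underlies the proof of Corollary~\ref{c:limit}.) The only real obstacle is the first step --- carrying the proof of Theorem~\ref{t:main} through in the PL category while keeping the constructed homeomorphisms supported in a fixed compact interval; once that is in hand, the packing and the assembly are routine.
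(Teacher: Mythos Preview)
Your proposal is correct and takes essentially the same approach as the paper: replace the smooth $\rho$ in Lemma~\ref{l:eachword} by a PL map (the paper writes down an explicit $\rho_0$) to see that every RAAG is residually $\PLI$, and then pack $\prod_\Z\PLI$ into $\PLR$ and assemble diagonally via Lemma~\ref{l:residual}. The paper's version is marginally leaner---it passes directly from ``residually RAAG'' to ``residually $\PLI$'' and applies Lemma~\ref{l:residual} once, rather than first establishing $A(\Gamma)\hookrightarrow\PLI$ and routing through $\prod_i A(\Gamma_i)$ as you do---but the content is the same.
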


Our construction requires infinitely many domains of linearity in $\R$, so that we must take infinite subdivisions of $\R$ in order to get residually RAAG groups inside of $\PLR$.  The reader may compare Corollary \ref{c:PL} with the work of Brin and Squier~\cite{BS1985}, which shows that the group $\operatorname{PLF}(\R)$ of piecewise-linear homeomorphisms of $\R$ with finite subdivisions contains no nonabelian free subgroup.

Recall that a group $G$ is called \emph{virtually special} if a finite index subgroup $H\le G$ acts properly and cocompactly on a CAT(0) cube complex $X$ such that the quotient $H\backslash X$ avoids certain pathologies of its half--planes (see \cite{HW2008}).  A consequence of such an action is that the subgroup $H$ embeds in a right-angled Artin group.

\begin{cor}\label{c:vsp}
Let $G$ be a group which is virtually special.  Then there is a finite index subgroup $H\le G$ which embeds into $\DiffR$ and also into $\PLR$.
\end{cor}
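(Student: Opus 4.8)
The plan is to reduce everything to Theorem~\ref{t:main} and Corollary~\ref{c:PL} by invoking the Haglund--Wise embedding theorem. First I would unwind the definition of virtual specialness: since $G$ is virtually special, there is a finite index subgroup $H\le G$ acting properly and cocompactly on a CAT(0) cube complex $X$ such that the quotient $H\backslash X$ is a special cube complex in the sense of \cite{HW2008}. The key input, due to Haglund and Wise, is that for such an action $H$ embeds into the right-angled Artin group $A(\Gamma)$ whose defining graph $\Gamma$ is the crossing graph of $H\backslash X$ (equivalently, the graph recording which hyperplanes of $X$ cross, up to the $H$-action). Because the $H$-action on $X$ is cocompact, there are only finitely many $H$-orbits of hyperplanes, so $\Gamma$ is a finite simplicial graph and $A(\Gamma)$ is a genuine right-angled Artin group in the sense of this paper.

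Next I would simply compose embeddings. Theorem~\ref{t:main} supplies an injection $A(\Gamma)\hookrightarrow\DiffR$, and precomposing with the Haglund--Wise embedding $H\hookrightarrow A(\Gamma)$ yields an injection $H\hookrightarrow\DiffR$. For the piecewise-linear statement, observe that $A(\Gamma)$ is finitely generated, hence countable, and is trivially a residually RAAG group, since the identity homomorphism $A(\Gamma)\to A(\Gamma)$ already witnesses the defining property. Thus Corollary~\ref{c:PL} gives $A(\Gamma)\hookrightarrow\PLR$, and precomposing again with $H\hookrightarrow A(\Gamma)$ produces $H\hookrightarrow\PLR$. Alternatively, since $H$ itself embeds in a right-angled Artin group it is a countable residually RAAG group, so one may apply Corollary~\ref{c:PL} directly to $H$.

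I do not expect a genuine obstacle here: the argument is a short composition once the correct form of the Haglund--Wise theorem is cited. The only point deserving a word of care is the finiteness of $\Gamma$, which is precisely what cocompactness of the $H$-action on $X$ provides, and which is needed so that $A(\Gamma)$ lies within the scope of Theorem~\ref{t:main} and Corollary~\ref{c:PL} as stated for finite graphs.
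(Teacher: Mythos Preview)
Your proposal is correct and matches the paper's own reasoning: the paper does not give a separate proof of this corollary but derives it directly from the sentence preceding it, namely that the finite-index special subgroup $H$ embeds in a right-angled Artin group by Haglund--Wise, after which Theorem~\ref{t:main} and Corollary~\ref{c:PL} apply. Your additional remarks on the finiteness of $\Gamma$ via cocompactness are accurate and make explicit a point the paper leaves implicit.
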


Examples of virtually special groups include fundamental groups of closed surfaces and finite volume hyperbolic $3$--manifold groups~\cite{Agol2013,Wise2012}.  Combining the work of Bergeron--Haglund--Wise~\cite{BHW2011} and Bergeron--Wise~\cite{BW2012}, we have that there are virtually special closed hyperbolic manifolds in all dimensions.

The finitely presented subgroups of diffeomorphism groups are generally very complicated.  In~\cite{Bridson2012}, Bridson used a virtually special version of the Rips machine to produce finitely presented subgroups of right-angled Artin groups with exotic algorithmic properties. An arbitrary class of groups which contains sufficiently complicated right-angled Artin groups thereby also has finitely presented subgroups with exotic algorithmic properties (cf.~\cite{KK2014a}):

\begin{cor}\label{c:algorithm}
Suppose $G=\DiffR$ or $G=\PLR$.  Then there is a finitely presented subgroup $H\le G$ such that the conjugacy problem in $H$ is unsolvable.  Furthermore, the isomorphism problem for the class of finitely presented subgroups of $G$ is unsolvable.
\end{cor}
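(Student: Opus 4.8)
The plan is to obtain Corollary~\ref{c:algorithm} as a formal consequence of Theorem~\ref{t:main} (and of Corollary~\ref{c:PL} in the $\PLR$ case) together with the algorithmic pathologies among finitely presented subgroups of right-angled Artin groups produced by Bridson in~\cite{Bridson2012}. The conceptual point to exploit is that both ``$H$ has unsolvable conjugacy problem'' and ``the isomorphism problem for a class of finitely presented groups is unsolvable'' are statements about isomorphism types of groups and about the finite presentations chosen to describe them; they are insensitive to which overgroup a given finitely presented group happens to sit inside. So once one knows that every finitely presented subgroup of a RAAG is, up to isomorphism, a finitely presented subgroup of $G$, the corollary follows with no real computation.

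First I would record the input from~\cite{Bridson2012} in the form needed: there is a right-angled Artin group $A(\Gamma_0)$ and a finitely presented subgroup $H_0\le A(\Gamma_0)$, equipped with an explicitly computable finite presentation coming from the (virtually special) Rips construction, whose conjugacy problem is unsolvable; and there is a recursive family $\{\langle S_n\mid R_n\rangle\}_{n\in\N}$ of finite presentations, each presenting a finitely presented subgroup of some right-angled Artin group, for which the isomorphism problem is unsolvable (no algorithm decides, given $(m,n)$, whether $\langle S_m\mid R_m\rangle\cong\langle S_n\mid R_n\rangle$). Next, by Theorem~\ref{t:main} (resp.\ Corollary~\ref{c:PL}), every right-angled Artin group embeds into $G=\DiffR$ (resp.\ $G=\PLR$); composing such an embedding with the inclusion $H_0\hookrightarrow A(\Gamma_0)$ realizes $H_0$ as a finitely presented subgroup of $G$. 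Since the conjugacy problem of a finitely presented group depends only on a chosen finite presentation and not on any ambient group, this copy of $H_0$ inside $G$ still has unsolvable conjugacy problem, proving the first assertion.

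For the isomorphism statement, observe that each $\langle S_n\mid R_n\rangle$ in Bridson's family presents a group that embeds into a right-angled Artin group, hence, composing with the embeddings of Theorem~\ref{t:main} (resp.\ Corollary~\ref{c:PL}), is isomorphic to a finitely presented subgroup of $G$. Thus $\{\langle S_n\mid R_n\rangle\}_{n\in\N}$ is a recursive family of finite presentations of groups lying in the class of finitely presented subgroups of $G$, and these are legitimate inputs to the isomorphism problem for that class. Any algorithm solving the latter would in particular decide isomorphism within Bridson's family, which is impossible; hence the isomorphism problem for the class of finitely presented subgroups of $G$ is unsolvable.

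I do not expect a genuine obstacle here: the statement is a transfer result, and the nontrivial mathematics is entirely in Theorem~\ref{t:main} and in~\cite{Bridson2012}. The only points requiring (mild) care are bookkeeping: phrasing the isomorphism problem as a decision problem about finite presentations, noting that passing from a subgroup of a RAAG to the ``same'' subgroup of $G$ changes neither the presentation nor the isomorphism type, so that unsolvability is inherited by the larger class; and, if one wishes the conjugacy statement to be self-contained, recalling that Bridson's $H_0$ comes with an explicitly computable finite presentation. Both are routine, and neither affects the structure of the argument.
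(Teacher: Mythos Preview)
Your proposal is correct and matches the paper's approach: the paper does not even give a formal proof environment for Corollary~\ref{c:algorithm}, but simply states it after the sentence ``An arbitrary class of groups which contains sufficiently complicated right-angled Artin groups thereby also has finitely presented subgroups with exotic algorithmic properties (cf.~\cite{KK2014a})'', treating it as an immediate transfer from Bridson's results~\cite{Bridson2012} via Theorem~\ref{t:main} and Corollary~\ref{c:PL}. Your write-up just makes explicit the bookkeeping that the paper leaves to the reader.
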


The reader may contrast Corollary \ref{c:algorithm} with Thompson's groups $F$, $T$, and $V$, in which the conjugacy problem is generally solvable~\cite{BF2014}. The group 
$T$ can be embedded into 
 $\DiffS$~\cite{GS1987}.

The group $H$ in Corollary \ref{c:algorithm} is not conjugacy separable, since conjugacy separable groups have solvable conjugacy problems.  In~\cite{FF2001}, Farb and Franks showed that the group of real analytic diffeomorphisms of $\R$ contains non--solvable Baumslag--Solitar groups, which are not even residually finite.


\subsection{Notes and references}
It is well-known that right-angled Artin groups embed in $\Homeo(\R)$, as follows from the fact that right-angled Artin groups admit left--invariant orders.  Similarly, right-angled Artin groups embed in $\Homeo(\mathbb{S}^1)$ because they admit left--invariant cyclic orderings. Alternatively, one can embed an arbitrary right-angled Artin group into the mapping class group $\Mod(S)$ of a compact surface with one boundary, and then embed $\Mod(S)$ into $\Homeo(\mathbb{S}^1)$. As noted in~\cite[p.47]{Farb2006}, it is generally difficult to smoothen these embeddings.  The reader may consult \cite{Navas2011} for a general discussion of the relationship between linear and cyclic orderings of groups and embeddings into $\Homeo(\R)$ and $\Homeo(\mathbb{S}^1)$.

Farb and Franks~\cite{FF2003} proved that every residually torsion--free nilpotent group embeds in the group of $C^1$ diffeomorphisms of the interval and of the circle.  This implies that residually RAAG groups embed in the group of $C^1$ diffeomorphisms of both the interval and of the circle.  Their construction does not allow for twice differentiable diffeomorphisms.  In fact, Plante and Thurston~\cite{PT1976} showed that nilpotent groups of $C^2$ diffeomorphisms of the interval or of the circle are abelian.  In the same vein, Farb and Franks~\cite{FF2003} show that every nilpotent subgroup of $C^2$ diffeomorphisms of $\R$ is metabelian.

For the case when the dimension is two, Calegari and Rolfsen proved that every right-angled Artin group embeds into the piecewise linear homeomorphism group of a square fixing the boundary~\cite{CR2014}.
M.~Kapovich showed that every right-angled Artin group embeds into the symplectomorphism group of the sphere~\cite{Kapovich2012}. The second and the third named authors refined this result by embedding every right-angled Artin group into the symplectomorphism groups of the disk and of the sphere by quasi-isometric group embeddings~\cite{KK2014a}. 

\section{Acknowledgements}
The authors thank B. Farb, A.~Navas and D. Rolfsen for helpful comments.  
The authors are especially grateful to J. Bowden for critical comments regarding a previous version of the paper.
The second named author was supported by National Research Foundation of Korean Government (NRF-2013R1A1A1058646) and Samsung Science and Technology Foundation (SSTF-BA1301-06).
The third named author is partially supported by NSF grant DMS-1203964.

\section{Building an injective homomorphism}
Throughout this section, we let $\Gamma$ be a finite graph.
Consider an element $g$ of $A(\Gamma)$.
A \emph{reduced word representing $g$} means a minimal length word in the standard generating set $V(\Gamma)\cup V(\Gamma)^{-1}$ representing $g$.
The \emph{support of $g$} is the set of vertices $v$ of $\Gamma$ such that $v$ or $v^{-1}$ appears in a reduced word representing $g$. We denote the support of $g$ by $\supp(g)$.
We say $g$ is a \emph{clique word} if every pair of vertices in $\supp(g)$ are adjacent in $\Gamma$. 
A \emph{clique word decomposition for $g$} is the concatenation $w_k\cdots w_1$ of clique words $w_1, w_2,\ldots,w_k$ such that the concatenation is still reduced and represents $g$ in $A(\Gamma)$. Since vertices themselves are clique words, every element in $A(\Gamma)$ has a clique word decomposition.
A clique word decomposition $w_k\cdots w_1$ is \emph{left--greedy} if for each $i<k$ and for each $v\in\supp(w_i)$, there exists a vertex $v'\in\supp(w_{i+1})$ such that $[v,v']\ne1$.  The left--greedy clique word decomposition can be compared to the left--greedy normal form used in \cite{Koberda2012}.

\begin{lem}\label{l:leftgreedy}
Every element of $A(\Gamma)$ admits a left--greedy clique word decomposition.
\end{lem}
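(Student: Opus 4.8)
The plan is to induct on the word length $\ell(g)$ of $g\in A(\Gamma)$, peeling off from the left a maximal ``initial'' clique word at each stage. The only external input is the standard normal form theory for right-angled Artin groups: any two reduced words representing the same element of $A(\Gamma)$ differ by a finite sequence of \emph{shuffles}, each shuffle transposing two adjacent letters $v^{\pm1}w^{\pm1}$ with $\{v,w\}\in E(\Gamma)$. In particular a shuffle never transposes two occurrences of powers of a single vertex, so for each vertex the left-to-right sequence of signs of its occurrences is an invariant of $g$. Call $v\in V(\Gamma)$ \emph{initial} for $g$ if some reduced word for $g$ begins with $v$ or $v^{-1}$, and write $\operatorname{Init}(g)$ for the set of such vertices. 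Comparing a reduced word beginning with $v^{\pm1}$ to one beginning with $w^{\pm1}$ (for $v\ne w$ both initial) shows that the $w$-letter brought to the front must be shuffled past the leading $v^{\pm1}$, so $[v,w]=1$; hence $\operatorname{Init}(g)$ is a clique. Likewise each $v\in\operatorname{Init}(g)$ acquires a well-defined sign $\epsilon_v$ (the sign of its first occurrence in any reduced word) and a well-defined maximal multiplicity $m_v\ge1$, the largest $m$ for which some reduced word for $g$ begins with $(v^{\epsilon_v})^m$. Put $\iota(g)=\prod_{v\in\operatorname{Init}(g)}v^{\epsilon_v m_v}$, a clique word.

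The crux is the claim that $g=\iota(g)\,g'$ for an element $g'$ with $\ell(g)=\ell(\iota(g))+\ell(g')$ and $\operatorname{Init}(g')\cap\operatorname{Init}(g)=\varnothing$. To split $\iota(g)$ off on the left with lengths adding, I would enlarge it one letter at a time: starting from any reduced word for $g$, suppose it has been shuffled into the form $u\cdot Z$ with $u$ a clique word supported in $\operatorname{Init}(g)$ but not yet carrying the full multiplicity $m_v$ of some $v\in\operatorname{Init}(g)$. Writing the occurrences of $v$ in $u$ at the front and using that $g$ has a reduced word beginning with one more copy of $v^{\epsilon_v}$, the order-invariance of $v$-occurrences forces every letter lying between $u$ and the first $v$-occurrence in $Z$ to commute with $v$; that occurrence may therefore be shuffled up against $u$, enlarging it. Iterating gives a reduced word $\iota(g)\cdot W'$; let $g'$ be the element of $W'$. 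Finally, if $g'$ had a reduced word beginning with $v^{\pm1}$ for some $v\in\operatorname{Init}(g)$, then, since every letter of $\iota(g)$ commutes with $v$, shuffling that occurrence leftward past $\iota(g)$ inside $\iota(g)\cdot W'$ would yield a reduced word for $g$ with strictly more leading copies of $v^{\epsilon_v}$ than $\iota(g)$ has if the sign is $\epsilon_v$ (contradicting maximality of $m_v$), and would expose a cancellation, contradicting reducedness, if the sign is $-\epsilon_v$. This proves the disjointness.

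Granting the claim, the induction closes. If $g=1$ the empty decomposition works. Otherwise $\operatorname{Init}(g)\ne\varnothing$, hence $\iota(g)\ne1$ and $\ell(g')<\ell(g)$; by induction $g'$ has a left--greedy clique word decomposition $w_{k-1}\cdots w_1$, and we set $w_k=\iota(g)$. Since $\ell(g)=\ell(w_k)+\ell(w_{k-1})+\cdots+\ell(w_1)$ the concatenation $w_k\cdots w_1$ is reduced, and each factor is a clique word, so this is a clique word decomposition of $g$. For the left--greedy property only the junction $i=k-1$ is new. If $v\in\supp(w_{k-1})$ then, as $g'=w_{k-1}\cdots w_1$ is reduced and $w_{k-1}$ is a clique word, $g'$ has a reduced word beginning with $v^{\pm1}$; so $v\in\operatorname{Init}(g')$, hence by disjointness $v\notin\operatorname{Init}(g)=\supp(w_k)$. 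If moreover $v$ commuted with every vertex of $\supp(w_k)$, then shuffling a $v^{\pm1}$ leftward past $\iota(g)$ in $g=\iota(g)g'$ would put $v\in\operatorname{Init}(g)$, contradicting the previous sentence. Hence some $v'\in\supp(w_k)$ satisfies $[v,v']\ne1$, completing the induction.

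I expect the main obstacle to be the crux claim, and inside it the requirement that $\iota(g)$ carry the \emph{full} multiplicity of each initial vertex rather than a single copy: this is genuinely necessary --- for $g=v^2$ the decomposition $v\cdot v$ is not left--greedy whereas $v^2$ is --- and it is exactly maximality of the $m_v$ that forces $\operatorname{Init}(g')\cap\operatorname{Init}(g)=\varnothing$, which in turn supplies the left--greedy condition at the new junction. All of the ingredients reduce to careful bookkeeping with shuffles of reduced words.
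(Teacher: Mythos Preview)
Your proof is correct but takes a different route from the paper's. The paper uses a three-line extremal argument: among all clique word decompositions of $g$, choose one whose tuple of lengths $(|w_k|,\ldots,|w_1|)$ is lexicographically maximal; if it were not left--greedy, sliding some $v^{\pm1}$ from $w_{i-1}$ into $w_i$ would strictly increase this complexity, a contradiction. No detailed normal-form combinatorics is invoked beyond the observation that such a slide preserves reducedness. Your argument is instead constructive: you peel off the canonical maximal initial clique word $\iota(g)$ and induct on word length, leaning on the shuffle characterization of reduced words to show that $\operatorname{Init}(g)$ is a clique, to control signs and multiplicities, and to justify that letters preceding a first $v$-occurrence commute with $v$. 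What you gain is an explicit algorithm and, in effect, uniqueness --- one can check that your $\iota(g)$ is the only possible leftmost factor of any left--greedy decomposition, so the form you produce is canonical. For the paper's purposes, where only existence is used downstream in Lemma~\ref{l:eachword}, the extremal argument is the more economical choice.
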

\begin{proof}
Fix an element $g\in A(\Gamma)$.
Let us define the \emph{complexity} of a clique word decomposition $w_k w_{k-1} \cdots w_1$ for $g$ as the $k$-tuple consisting of the word lengths $(\abs{w_k},\abs{w_{k-1}},\ldots,\abs{w_1})$.
In the lexicographical order, this complexity is bounded above by $(\abs{g})$.
Let us assume $w_k w_{k-1}\cdots w_1$ is the maximal clique word decomposition for $g$ in this order.
If $w_k\cdots w_1$ is not left--greedy, then there exists $i$ and $v\in\supp(w_{i-1})$ be such that $v\cup\supp(w_i)$ spans a complete subgraph of $\Gamma$.  Then we can move an occurrence of $v$ or $v^{-1}$ in $w_{i-1}$ to $w_i$, i.e. we \emph{slide to the left}.  This is a contradiction to the maximality.
\end{proof}

Let $X$ be a smooth oriented manifold.
If $f\in\Diff(X)$, then we write the fixed point set of $f$ as $\Fix(f)$. The closure of $X \setminus \Fix(f)$ will be denoted as $\supp(f)$.
For $f_1,f_2,\ldots,f_n\in\Diff(X)$, let us define the \emph{disjointness graph} $\Lambda$ by
$V(\Lambda)=\{v_1,v_2,\ldots,v_n\}$ and 
\[
E(\Lambda) = \{\{v_i,v_j\} \co i\ne j\textrm{ and }\supp(f_i)\cap\supp(f_j)=\varnothing\}.\]
Note that two self-maps with disjoint supports commute. Hence, we have a group homomorphism $\phi\co A(\Lambda) \to \Diff(X)$ satisfying $\phi(v_i)=f_i$.
Often, it is not an obvious task at all to decide whether or not such a map $\phi$ is injective; see~\cite{CW2007,Kapovich2012,KK2014a,CR2014} for related work on diffeomorphism groups and~\cite{CP2001,CW2004,Koberda2012,CLM2012} on mapping class groups.

Let $H$ be a group. Another group $G$ is \emph{residually} $H$ if for each nontrivial element $g$ in $G$ there exists a group homomorphism $\phi_g\co G\to H$ such that $\phi_g(g)\ne1$.
For a smooth oriented manifold $X$, we let $\Diff(X,\partial X)$ denote the group of orientation preserving $C^{\infty}$ diffeomorphisms on $X$ which restrict to the identity near $\partial X$. 
The crucial fact we need is the following:
\begin{lem}\label{l:eachword}
The group $A(\Gamma)$ is residually $\DiffI$.
\end{lem}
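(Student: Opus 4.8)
The plan is to produce, for each nontrivial $g \in A(\Gamma)$, a collection of diffeomorphisms $f_v \in \DiffI$ indexed by $V(\Gamma)$, supported on subintervals of $I$ whose intersection pattern realizes $\Gamma$ as a disjointness graph, and then to show that the resulting homomorphism $\phi_g \colon A(\Gamma) \to \DiffI$ does not kill $g$. The key structural tool is the left--greedy clique word decomposition from Lemma~\ref{l:leftgreedy}: write $g = w_k \cdots w_1$ with each $w_i$ a clique word and the decomposition left--greedy. The idea is to build a nested/telescoping family of intervals so that applying $\phi_g$ to a point near the right endpoint and tracking its orbit under $w_1$, then $w_2$, and so on, moves it a definite amount at each stage; the left--greedy condition is exactly what prevents the contribution of $w_{i+1}$ from being undone or absorbed by $w_i$.

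First I would fix, once and for all, a ``model'' construction on the interval. For a clique $K$ in $\Gamma$, the clique subgroup $A(K) \cong \Z^{|K|}$; I would realize $\Z^{|K|}$ faithfully by commuting diffeomorphisms supported on a common interval $J$, for instance by taking $|K|$ diffeomorphisms that are simultaneously smoothly conjugate to translations on overlapping fundamental-domain pictures, or more simply by letting them act on disjoint ``coordinate'' subintervals inside $J$ — the point being that a faithful action of each clique subgroup is easy. Then, for the whole word $g = w_k\cdots w_1$, I would choose a strictly increasing sequence of points $0 < a_0 < a_1 < \cdots < a_k < 1$ and arrange that the diffeomorphisms used for the letters appearing in $w_i$ are supported inside $(a_{i-1}, a_{i+1})$ (say), with a ``push to the right'' behavior: a suitable test point $x_0$ slightly less than $a_1$ gets pushed by $\phi_g(w_1)$ into the region controlled by $w_2$, then pushed further by $\phi_g(w_2)$, and so on, ending at a point $> a_k$ and hence strictly greater than $x_0$. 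That forces $\phi_g(g)(x_0) \ne x_0$, so $\phi_g(g) \ne 1$.

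The main obstacle — and the step I expect to require real care — is the bookkeeping that makes the letters \emph{within} a single clique word $w_i$ cooperate rather than cancel. A clique word $w_i$ is a word in commuting generators, so it equals a product $v_1^{m_1}\cdots v_r^{m_r}$ with possibly mixed signs of the exponents $m_j$; I need $\phi_g(w_i)$ to genuinely advance the test point, which means I cannot have the positive and negative powers cancel on the orbit segment I am watching. The clean way around this is to exploit that distinct vertices in the clique can be supported on \emph{disjoint} subintervals of the common region (since they only need to commute, not interact), so that the orbit of the test point passes through one subinterval after another and each $v_j^{m_j}$ acts on it as a single nontrivial push whose direction I control by the sign of $m_j$; choosing the geometry so that every push is ``to the right into the next interval'' regardless of sign handles the mixed-exponent issue. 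The left--greedy property enters when passing from $w_i$ to $w_{i+1}$: it guarantees some vertex of $w_i$ fails to commute with some vertex of $w_{i+1}$, i.e. their supports must overlap, which is what lets the orbit point exit $w_i$'s region and genuinely enter $w_{i+1}$'s region rather than getting stuck.

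Finally I would assemble these pieces: given $1 \ne g$, fix a left--greedy decomposition of $g$, build the intervals and diffeomorphisms as above (all lying in $\DiffI$, i.e. identity near $\partial I$, which is automatic since every $f_v$ has support in the interior), verify that $\phi_g$ is a well-defined homomorphism $A(\Gamma) \to \DiffI$ (immediate: commuting generators have disjoint or commuting-by-construction supports), and check $\phi_g(g) \ne 1$ via the test-point argument. Since $g$ was arbitrary nontrivial, this exhibits $A(\Gamma)$ as residually $\DiffI$. I would expect the write-up to spend most of its length on carefully specifying the interval combinatorics and the explicit ``staircase'' of pushes, and on the inductive estimate that the test point strictly increases at each stage $w_1, w_2, \ldots, w_k$.
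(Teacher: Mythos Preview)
Your overall strategy matches the paper's: take a left--greedy decomposition $g = w_k \cdots w_1$, build a staircase of intervals, and track a test point pushed rightward by each $\phi_g(w_i)$ in turn. But your intra-clique-word mechanism has a real gap. You propose giving the vertices of each clique disjoint supporting subintervals and having the test point ``pass through one subinterval after another,'' yet this cannot happen: the support of a homeomorphism is invariant under it, so if the supports are disjoint, no factor can push the point out of its own subinterval into the next one. A product of commuting diffeomorphisms with disjoint supports moves the test point only inside whichever single subinterval it started in; the chaining you describe is impossible in this setup, and so the ``regardless of sign'' step you correctly flag as the main obstacle is not actually resolved by your plan. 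Relatedly, you state the left--greedy condition as ``some vertex of $w_i$ fails to commute with some vertex of $w_{i+1}$,'' which is strictly weaker than the actual condition (every vertex of $w_i$ has such a partner in $w_{i+1}$) and is not enough to propagate any choice from stage to stage.

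The missing idea, supplied in the paper, is to use the full left--greedy condition to inductively select a \emph{single} vertex $v_i \in \supp(w_i)$ for each $i$ with $[v_i,v_{i+1}] \ne 1$, and to make only this chain act on the staircase. Concretely, one places a bump $\rho_i$ supported on $I_i = [i,i+3/2]$ with $\rho_i(i+1/4) = i+5/4$, and sets $\psi_g(v) = \prod_{j:\,v_j=v} \rho_j^{\sigma_j}$, where $\sigma_j\in\{\pm1\}$ is the sign with which $v_j$ occurs in $w_j$. Adjacent vertices of $\Gamma$ never occur consecutively in the chain, so their assigned supports are disjoint and $\psi_g$ is a homomorphism. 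Within each $w_\ell$, every letter other than $v_\ell$ is adjacent to $v_\ell$ and hence fixes $I_\ell$ pointwise; thus $\psi_g(w_\ell)$ acts on the test point simply as $\rho_\ell^{n_\ell}$ with $n_\ell>0$ (the per-bump sign $\sigma_\ell$ absorbs whatever sign $v_\ell$ carried in $w_\ell$), pushing $[\ell+1/4,\ell+1/2]$ into $[\ell+5/4,\ell+3/2]$. This single-vertex-per-clique reduction is precisely what replaces your problematic ``pass through the subintervals'' picture.
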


\begin{proof}
Let us fix an element $g$ in $A(\Gamma)$.
We let $w_k \cdots w_1$ be a left--greedy clique decomposition representing $g$.
We will construct a group homomorphism 
\[\phi_g\co A(\Gamma) \to \DiffI\] such that $\phi_g(g)\ne1$.
We can inductively choose a (possibly redundant) sequence 
\[
v_1\in\supp(w_1),
v_2\in\supp(w_2),\ldots, v_k\in\supp(w_k)\] such that $[v_i,v_{i+1}]\ne1$ for each $i=1,2,\ldots,k-1$.
There exists $\sigma_i\in\{-1,1\}$ and $n_i>0$ such that $v_i^{\sigma_i n_i}$ is the highest power of $v_i$ in $w_i$. This means that $v_i\not\in\supp(w_i v_i^{-\sigma_i n_i})$.

We choose  $\rho\in\DiffR$ such that $\rho(1/4)=5/4$
and $\rho(x) = x$ for $x\le 0$ or $x\ge 3/2$. Put $\rho_i(x) = \rho(x-i)+i$ and $I_i = [i,i+3/2]$, so that $\supp\rho_i\subseteq I_i$. Note that $I_i\cap I_j =\emptyset$ for $\abs{i-j}>1$.

For each $v\in V(\Gamma)$, we define
\[
\psi_g(v) = \prod_{v_j=v} \rho_j^{\sigma_j}.
\]
This means that if $\{j \co v_j = v\} = \{j_1<j_2<\cdots<j_n\}$ then
$\psi_g(v) = \rho_{j_1}^{\sigma_1}\rho_{j_2}^{\sigma_2}\cdots \rho_{j_n}^{\sigma_n}$.
We use the convention that the empty multiplication is trivial.
Let us write $J_v = \cup_{v_j = v} I_j\supseteq \supp \psi_g(v)$.
Note that if $v_i=v=v_j$ and $i\ne j$, then $I_i\cap I_j=\emptyset$.
For each $\{u,v\}\in E(\Gamma)$, 
the choice of $v_1,v_2,\ldots,v_k$ implies that $J_u\cap J_v=\emptyset$.
In other words, $\Gamma$ is a subgraph of the disjointness graph of $\{\psi_g(v)\co v\in V(\Gamma)\}$.
It follows that $\psi_g$ defines a group homomorphism from $A(\Gamma)$ to $\DiffR$.

Suppose $\ell\in\{1,2,\ldots,k\}$.
For every $u\in\supp w_\ell \setminus\{v_\ell\}$, we have $J_u\cap J_{v_\ell}=\emptyset$.
Since $\ell+1/4 \in I_\ell \subseteq J_{v_\ell}$, 
we see that if $x\in [\ell+1/4,\ell+1/2]$ is an arbitrary point then 
\[\psi_g(w_\ell)(x) = \psi_g(v_\ell^{\sigma_\ell n_\ell})(x)=\rho_\ell^{n_\ell}(x) \in [\ell+5/4,\ell+3/2].\]
It follows that if $y\in [5/4,3/2]$ is arbitrary then 
\[\psi_g(g)(y)=\psi_g(w_k w_{k-1}\cdots w_1)(y) \in [k+5/4,k+3/2].\] 
In particular, $\psi_g(g)$ is not the identity.
By restricting the image of $\psi_g$ onto the interval $[0,k+2]$ and conjugating by a diffeomorphism $[0,k+2]\approx I$, we obtain a desired group homomorphism $\phi_g$.
\end{proof}

We have the following general fact.
\begin{lem}\label{l:residual}
Let $G$ and $H$ be groups.
If $G$ is countable and residually $H$, then $G$ embeds into the countable direct product $\prod_\Z H$.
\end{lem}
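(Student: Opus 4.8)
The plan is to construct the embedding one coordinate at a time: use the residual-$H$ hypothesis to produce, for each nontrivial element of $G$, a homomorphism to $H$ that detects it, and then to assemble this countable family of homomorphisms into a single map to the product $\prod_\Z H$.

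First I would enumerate the nontrivial elements of $G$. Since $G$ is countable, the set $G\setminus\{1\}$ can be listed as $g_1,g_2,g_3,\ldots$ (a finite or countably infinite list; if $G=\{1\}$ the claim is trivial, so assume otherwise). By the definition of residually $H$, for each index $n$ there is a homomorphism $\phi_n\colon G\to H$ with $\phi_n(g_n)\neq 1$.

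Next I would re-index this family by $\Z$. Fixing an injection $\iota\colon\N\to\Z$ (a bijection if the list above is infinite), I set $\psi_m=\phi_n$ when $m=\iota(n)$, and let $\psi_m$ be the trivial homomorphism $G\to H$ otherwise. Then
\[
\Phi\colon G\longrightarrow \prod_{m\in\Z}H,\qquad \Phi(g)=(\psi_m(g))_{m\in\Z},
\]
is a homomorphism, since it is a homomorphism in each coordinate (the universal property of the direct product). Finally I would check injectivity: the kernel of $\Phi$ is $\bigcap_{m\in\Z}\ker\psi_m$, and any $g\neq 1$ equals some $g_n$, for which $\psi_{\iota(n)}(g)=\phi_n(g_n)\neq 1$; hence $g\notin\ker\Phi$, and $G$ embeds into $\prod_\Z H$.

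I do not anticipate any genuine difficulty here: the statement is essentially a bookkeeping lemma. The only mild subtlety is reconciling a possibly finite family of homomorphisms with the fixed index set $\Z$, which is precisely why the unused coordinates are filled in with the trivial homomorphism. (The same argument shows that $G$ embeds into $\prod_{\N}H$, and the choice of $\Z$ as the index set is made purely for later convenience.)
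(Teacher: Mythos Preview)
Your proof is correct and follows essentially the same approach as the paper: take the diagonal map into the product of the detecting homomorphisms, one for each nontrivial element of $G$. The paper is slightly terser, indexing the product directly by $G$ (writing $\psi(x)(g)=\phi_g(x)$ in $\prod_{g\in G}H$) and implicitly using that a countable index set can be identified with $\Z$, whereas you carry out that re-indexing explicitly.
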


\begin{proof}
Define
\[\psi \co G\to\prod_{g\in G}H\]
by $\psi(x)(g)=\phi_{g}(x)$, where $\phi_g$ is as in the definition of residually $H$ group.
\end{proof}

Since $\DiffInf\hookrightarrow\DiffR$, Theorem~\ref{t:main} a trivial consequence of the following:

\begin{thm}\label{t:stronger}
Every right-angled Artin group embeds into $\DiffInf$.
\end{thm}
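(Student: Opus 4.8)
The plan is to combine Lemma~\ref{l:eachword} with Lemma~\ref{l:residual}, and then to stack countably many diffeomorphisms of $I$, each trivial near $\partial I$, into a single diffeomorphism of $[0,\infty)$. Since $\Gamma$ is finite, the group $A(\Gamma)$ is finitely generated, hence countable; by Lemma~\ref{l:eachword} it is residually $\DiffI$, so Lemma~\ref{l:residual} produces an injection $A(\Gamma)\hookrightarrow\prod_\Z\DiffI$. Because $\prod_\Z\DiffI\cong\prod_{n\ge 0}\DiffI$ as groups (just permute the index set), it will suffice to build an injective homomorphism $\Phi\co\prod_{n\ge 0}\DiffI\hookrightarrow\DiffInf$.

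To construct $\Phi$, I would set $K_n=[n,n+1]$ for $n\ge 0$, so that the $K_n$ cover $[0,\infty)$ and two distinct ones meet, if at all, in a single integer, and fix affine identifications $K_n\approx I$. Given $(f_n)_{n\ge 0}$ with each $f_n\in\DiffI$, define $\Phi((f_n)_n)=F$, where $F\co[0,\infty)\to[0,\infty)$ is the identity near $0$ and on each $K_n$ equals the push-forward of $f_n$ under $K_n\approx I$. This is unambiguous because $f_n$ is the identity near $\partial I$, so both defining clauses give the identity in a neighborhood of each integer. For the same reason $F$ is the identity near every integer, while on the interior of each $K_n$ it is a translate of $f_n$; hence $F$ is $C^\infty$ on all of $[0,\infty)$, is orientation preserving, and fixes $0$, i.e.\ $F\in\DiffInf$. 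Since the $f_n$ act on pairwise disjoint subintervals (the support of each pushed-forward $f_n$ is a compact subset of the open interval $(n,n+1)$), $\Phi$ is a group homomorphism, and it is injective since $F=\mathrm{id}$ forces every $f_n=\mathrm{id}$. Composing $\Phi$ with the embedding $A(\Gamma)\hookrightarrow\prod_{n\ge 0}\DiffI$ would then finish the proof.

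I expect no serious obstacle. The one point deserving care is the smoothness of the infinite concatenation $F$, and this works precisely because every point of $[0,\infty)$ has a neighborhood on which $F$ coincides with either one translated block map $f_n$ or the identity --- the cover $\{K_n\}$ is \emph{locally finite} --- and because membership in $\DiffI$ already forces triviality near $\partial I$, so that no smooth matching condition at the junction points $n$ is required. The same argument with $\PLI$ in place of $\DiffI$ gives Corollary~\ref{c:PL}; there it is exactly this spreading of the $K_n$ out to infinity that necessitates an infinite subdivision of $\R$.
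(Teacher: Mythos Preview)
Your proposal is correct and follows essentially the same approach as the paper: combine Lemma~\ref{l:eachword} and Lemma~\ref{l:residual} to embed $A(\Gamma)$ into $\prod_\Z\DiffI$, and then use the embedding $\prod_\Z\DiffI\hookrightarrow\DiffInf$. The paper simply asserts this last embedding as a fact, whereas you spell it out explicitly via the locally finite cover $K_n=[n,n+1]$; your verification that the concatenated map is $C^\infty$ (because each $f_n$ is the identity near $\partial I$) is exactly the point that makes this work.
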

\begin{proof}
Immediate from 
Lemmas~\ref{l:eachword} and~\ref{l:residual}, as well as the fact that 
\[\prod_\Z \DiffI\hookrightarrow\DiffInf.\qedhere\]
\end{proof}

\begin{proof}[Proof of Corollary~\ref{c:limit}]
Lemma~\ref{l:eachword} implies that a residually RAAG is residually $\DiffI$.
We proceed as Theorem~\ref{t:stronger}.
\end{proof}

\begin{proof}[Proof of Corollary~\ref{c:PL}]
It suffices to show that every right-angled Artin group is residually $\PLI$. For this, 
we follow the proof of Lemma~\ref{l:eachword} by using 
$\rho_0\in \PLR$ defined by
\[\rho_0(x) = 
\begin{cases} x &\mbox{if } x<0 \mbox{ or }x\ge\frac32\\ 
5x & \mbox{if } 0\le x<\frac14\\
(x+6)/5 & \mbox{if } \frac14\le x<\frac32
 \end{cases}
 \]
 instead of $\rho$.
\end{proof}

\bibliographystyle{amsplain}
\bibliography{./ref}

\end{document}